\newtheorem{theorem}{Theorem}
\newtheorem{corollary}{Corollary}
\newtheorem{remark}{Remark}
\DeclareMathOperator{\diag}{diag}
\begin{document}

\title{A Constructive Cayley Representation of Orthogonal Matrices\\
and Applications to Optimization}

\author{Iwo Biborski}
\date{}

\maketitle

\footnotetext{2010 MSC: 15A15, 15A18, 15A45}
\footnotetext{Key words: orthogonal matrices, Cayley transform, signature matrices, optimization}
\footnotetext{Author's e-mail: iwo.biborski@gmail.com}

\begin{abstract}
It is known that every real orthogonal matrix can be brought into the domain
of the Cayley transform by multiplication with a suitable diagonal signature matrix.
In this paper we provide a constructive and numerically efficient algorithm
that, given a real orthogonal matrix $U$, computes a diagonal matrix
$D$ with entries in $\{\pm1\}$ such that the Cayley transform of $DU$
is well defined.
This yields a representation of $U$ in the form
\[
U = D(I-S)(I+S)^{-1},
\]
where $S$ is a skew-symmetric matrix.
The proposed algorithm requires $O(n^{3})$ arithmetic operations and
produces an explicit quantitative bound on the associated skew-symmetric
generator.
As an application, we show how this construction can be used to control
singularities in Cayley-transform-based optimization methods on the
orthogonal group.
\end{abstract}

\section{Introduction}

Let \(X\) be an \(n\times n\) real matrix and let \(X^{T}\) denote its transpose.
By \(I\) we denote the identity matrix of appropriate size.
A matrix \(U\) is called \emph{orthogonal} if \(UU^{T}=U^{T}U=I\).

For a matrix \(A\) such that \(\det(A+I)\neq 0\), the \emph{Cayley transform} is defined by
\[
C(A)=(I-A)(A+I)^{-1}=-I+2(A+I)^{-1}.
\]
When restricted to orthogonal matrices without eigenvalue \(-1\),
the Cayley transform is a bijection onto the space of skew-symmetric matrices.

It was shown in \cite{OsLie,Ka,Bib,Od} that for any real matrix \(A\)
(or more generally over a field of characteristic different from \(2\)),
there exists a diagonal matrix \(D\) with diagonal entries in \(\{-1,1\}\)
such that \(\det(A+D)\neq 0\).
Consequently, for any orthogonal matrix \(U\), there exists a diagonal
\emph{signature matrix} \(D\) such that the Cayley transform \(C(DU)\) is well defined.

The known proofs of existence of such a matrix \(D\) are nonconstructive.
The main purpose of this paper is to provide a \emph{constructive and efficient}
algorithm for computing \(D\).

\vspace{1ex}

We recall below an earlier result that motivates the present work. We put $k_a(A,\lambda)$ for an algebraic multiplicity of eigenvalue $\lambda$ of a matrix $A$. We have the following theorem

\begin{theorem}[{\cite[Theorem~1]{Bib}}]\label{thm:old}
Let \(A\) be an \(n\times n\) matrix over a field of real or complex numbers
and let \(\lambda\neq 0\) be an eigenvalue of \(A\).
Then there exists an index \(i\in\{1,\dots,n\}\) such that
\[
k_a(D_iA,\lambda) < k_a(A,\lambda),
\]
where \(D_i\) is the diagonal matrix whose \(i\)-th diagonal entry equals \(-1\)
and all others equal \(1\).
\end{theorem}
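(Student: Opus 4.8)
The plan is to reduce the statement to a single polynomial identity relating the characteristic polynomials of $A$ and $D_i A$, and then to extract the conclusion from the behaviour of $p_A'$ at $\lambda$. Throughout write $p_A(x)=\det(xI-A)$, let $k=k_a(A,\lambda)$, and for each $i$ let $q_i(x)$ denote the characteristic polynomial of the principal submatrix of $A$ obtained by deleting the $i$-th row and column.

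First I would exploit that $D_i=I-2e_ie_i^{T}$ is a rank-one perturbation of the identity, so that $D_iA=A-2e_i(e_i^{T}A)$ and hence $xI-D_iA=(xI-A)+2e_i(e_i^{T}A)$. Applying the matrix determinant lemma gives, for every $x$ with $xI-A$ invertible,
\[
\det(xI-D_iA)=p_A(x)\,\bigl(1+2\,e_i^{T}A(xI-A)^{-1}e_i\bigr).
\]
Using the identity $A(xI-A)^{-1}=x(xI-A)^{-1}-I$ together with $\bigl[(xI-A)^{-1}\bigr]_{ii}=q_i(x)/p_A(x)$, the bracketed factor simplifies and the whole expression collapses to
\[
\det(xI-D_iA)=2x\,q_i(x)-p_A(x).
\]
Since both sides are polynomials agreeing on the dense set where $\det(xI-A)\neq 0$, this is an identity of polynomials.

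Next I would read off the multiplicity of $\lambda$ on both sides. As $(x-\lambda)^{k}$ divides $p_A(x)$, the identity shows that $(x-\lambda)^{k}$ divides $\det(xI-D_iA)$ if and only if it divides $2x\,q_i(x)$. Here the hypothesis $\lambda\neq 0$ is decisive: the factor $2x$ does not vanish at $x=\lambda$, so divisibility of $2x\,q_i(x)$ by $(x-\lambda)^{k}$ is equivalent to divisibility of $q_i(x)$ by $(x-\lambda)^{k}$. Consequently $k_a(D_iA,\lambda)<k$ holds precisely when $(x-\lambda)^{k}\nmid q_i(x)$, and it suffices to produce one such index $i$.

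For the final step I would invoke Jacobi's formula for the derivative of a determinant, namely $p_A'(x)=\sum_{i=1}^{n}q_i(x)$ (the trace of $\operatorname{adj}(xI-A)$). Because $\lambda$ is a root of $p_A$ of multiplicity $k\geq 1$ and the ground field has characteristic zero, $\lambda$ is a root of $p_A'$ of multiplicity exactly $k-1$, so $(x-\lambda)^{k}\nmid p_A'$. If every $q_i$ were divisible by $(x-\lambda)^{k}$, then so would be their sum $p_A'$, a contradiction; hence $(x-\lambda)^{k}\nmid q_i(x)$ for some $i$, giving $k_a(D_iA,\lambda)<k$. I expect the only delicate point to be the rank-one determinant computation of the first step, in particular getting the simplification to $2x\,q_i-p_A$ correct and justifying passage from a pointwise identity to an identity of polynomials; once that identity is secured, the argument through Jacobi's formula and the multiplicity of $\lambda$ in $p_A'$ is routine.
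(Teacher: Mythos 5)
Your proof is correct, but note that there is nothing in this paper to compare it against: Theorem~\ref{thm:old} is stated here without proof, imported verbatim from \cite{Bib}, so your argument stands or falls on its own merits --- and it stands. The key identity
\[
\det(xI-D_iA)\;=\;2x\,q_i(x)-p_A(x)
\]
is right: the matrix determinant lemma applies since $D_iA=A-2e_i(e_i^{T}A)$ is a rank-one correction, the simplification via $A(xI-A)^{-1}=x(xI-A)^{-1}-I$ and $\bigl[(xI-A)^{-1}\bigr]_{ii}=q_i(x)/p_A(x)$ is clean, and the passage from a pointwise identity on the cofinite set $\{x:p_A(x)\neq0\}$ to a polynomial identity is legitimate over an infinite field. (Quick sanity checks confirm it: the right-hand side is monic of degree $n$, and its $x^{n-1}$ coefficient is $-\tr A+2a_{ii}=-\tr(D_iA)$, as it must be.) Your use of the two hypotheses is exactly where it should be: $\lambda\neq0$ makes $(x-\lambda)^{k}\mid 2x\,q_i(x)$ equivalent to $(x-\lambda)^{k}\mid q_i(x)$, and characteristic zero guarantees that $\lambda$ has multiplicity exactly $k-1$ in $p_A'$, so Jacobi's formula $p_A'=\sum_i q_i$ forces some $q_i$ to miss divisibility by $(x-\lambda)^{k}$. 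One pleasant byproduct worth noting: summing your identity over $i$ gives $\sum_{i=1}^{n}\det(xI-D_iA)=2x\,p_A'(x)-n\,p_A(x)$, which packages the whole argument into a single line and makes transparent that the multiplicity must drop for at least one index. Your approach is also effectively constructive --- one finds the index $i$ by testing divisibility of the $n$ polynomials $q_i$ --- which fits the spirit of the paper, whose Theorem~\ref{thm:main} is precisely an effort to replace iterated applications of this multiplicity-reduction result by a single $O(n^{3})$ elimination pass.
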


This result yields a constructive but non-optimal procedure for computing a
signature matrix by successive reduction of eigenvalue multiplicity.

\section{Main Result}

\begin{theorem}\label{thm:main}
Let $A$ be an $n\times n$ matrix in a field of real or complex numbers.
Then there exists a diagonal matrix
$D=\diag(d_1,\dots,d_n)$ with $d_i\in\{\pm1\}$ such that
\[
|\det(A+D)| \ge 1.
\]
Consequently, $-1$ is not an eigenvalue of $DA$, and the Cayley transform
$C(DA)$ is well defined.
Moreover, such a matrix $D$ can be computed using a Gaussian-elimination-type
algorithm using arithmetic operations $O(n^3)$.
\end{theorem}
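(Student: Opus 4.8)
The plan is to bypass the nonconstructive multilinearity/averaging argument entirely and instead build $D$ one sign at a time while running Gaussian elimination (without row interchanges) on $A+D$, arranging that every pivot has modulus at least $1$. Since such an elimination yields $\det(A+D)=\prod_{k=1}^{n}p_k$, where $p_k$ denotes the $k$-th pivot, a uniform lower bound $|p_k|\ge 1$ immediately forces $|\det(A+D)|\ge 1$. Thus the entire quantitative statement is reduced to a sequence of one-dimensional sign decisions, and the construction is manifestly of Gaussian-elimination type.

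The structural observation driving the construction is that the signs can be chosen \emph{online}: at the moment we reach column $k$, the pivot is forced to have the form $p_k=s_k+d_k$, where $s_k$ is the $(k,k)$ entry of the working matrix just before $d_k$ is inserted, i.e.\ the $(k,k)$ entry of the Schur complement of the leading $(k-1)\times(k-1)$ block. I would verify that $s_k$ depends only on $A$ and on the already-chosen signs $d_1,\dots,d_{k-1}$. Indeed, $d_k$ occupies solely the $(k,k)$ position of $A+D$ and is inserted only at step $k$; every update touching the $(k,k)$ entry during the first $k-1$ steps is built from earlier pivots and from off-diagonal working entries, and none of these can carry the signs $d_k,\dots,d_n$, which live only on later diagonal positions. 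Confirming this non-interference is the main technical point, since it is exactly what legitimizes the greedy choice rather than leaving it a heuristic.

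Given $s_k$, I would choose $d_k\in\{\pm1\}$ so as to maximize $|s_k+d_k|$. In the real case this is simply $d_k=\operatorname{sign}(s_k)$, giving $|s_k+d_k|=1+|s_k|\ge 1$. In the complex case I would invoke the parallelogram-type identity
\[
|s_k+1|^2+|s_k-1|^2 = 2+2|s_k|^2 \ge 2,
\]
so that at least one of the two choices satisfies $|s_k+d_k|^2\ge 1$; selecting that one yields $|p_k|\ge 1$. Because each pivot produced this way is nonzero, an easy induction shows that all leading principal minors of $A+D$ are nonzero (the minor of order $k$ being the product of the first $k$ pivots); hence the elimination proceeds without breakdown and no pivoting is needed.

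Finally I would assemble the consequences. Multiplying the $n$ pivots gives $|\det(A+D)|\ge 1$. Since $D$ is a signature matrix we have $D^2=I$, so $DA+I=D(A+D)$ and therefore $\det(DA+I)=\det(D)\det(A+D)\neq 0$; thus $-1$ is not an eigenvalue of $DA$ and the Cayley transform $C(DA)$ is well defined. The cost is that of a single Gaussian elimination, namely $O(n^3)$ arithmetic operations, with the per-step sign decision contributing only $O(n)$ additional work in total, so the stated complexity holds.
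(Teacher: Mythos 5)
Your proposal is correct and follows essentially the same route as the paper: a greedy, online choice of each sign $d_k$ so that the corresponding Gaussian-elimination pivot $s_k+d_k$ of $A+D$ has modulus at least $1$, which the paper packages as an induction on $n$ via the Schur complement, with the determinant equal to the product of pivots. If anything, your parallelogram identity $|s_k+1|^2+|s_k-1|^2=2+2|s_k|^2\ge 2$ makes the complex case more explicit than the paper's appeal to characteristic $\neq 2$, which by itself only guarantees a nonzero pivot rather than one of modulus at least $1$.
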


\begin{proof}
Since $D^{2}=I$, we have
\[
|\det(DA+I)|=|\det(DA+D^{2})|=|\det(D(A+D))|=|\det(A+D)|,
\]
and therefore
\begin{equation}\tag{4}
|\det(DA+I)|\geq 1 \quad\Longleftrightarrow\quad |\det(A+D)|\geq 1.
\end{equation}
Hence, it suffices to show that there exists a diagonal matrix
$D=\diag(d_1,\dots,d_n)$ with $d_i\in\{\pm1\}$ such that $|\det(A+D)|\geq 1$.
We proceed by induction on $n$.

\medskip
\noindent\emph{Base case ($n=2$).}
Let
\[
A+D=
\begin{bmatrix}
a_{11}+d_1 & a_{12} \\
a_{21} & a_{22}+d_2
\end{bmatrix}.
\]
Since the field has characteristic different from $2$, either
$a_{11}+1\neq 0$ or $a_{11}-1\neq 0$, we may choose $d_1$ such that
$|a_{11}+d_1|\ge 1$.
We then apply a single step of Gaussian elimination to eliminate the entry
$a_{21}$, obtaining
\[
\begin{bmatrix}
a_{11}+d_1 & a_{12} \\
0 & a_{22}-\dfrac{a_{12}a_{21}}{a_{11}+d_1}+d_2
\end{bmatrix}.
\]
Again using that the field has a characteristic different from $2$, we may choose
$d_2\in\{\pm1\}$ so that the second diagonal entry absolute value is greater or equal to 1.
Thus $|\det(A+D)|\geq 1$.
\medskip

\noindent\emph{Induction step.}
Assume the claim holds for matrices of size $(n-1)\times(n-1)$.
Let $A$ be an $n\times n$ matrix and consider $A+D$ with
$D=\diag(d_1,\dots,d_n)$.
Choose $d_1\in\{\pm1\}$ so that $|a_{11}+d_1|\ge 1$.
Applying Gaussian elimination using only row replacement operations
$R_i\leftarrow R_i+cR_1$, which preserve the determinant, we eliminate the
entries below $a_{11}+d_1$ and obtain a matrix of the form
\[
\begin{bmatrix}
a_{11}+d_1 & * \\
0 & A_1'+D_1
\end{bmatrix},
\]
where $A_1'$ is an $(n-1)\times(n-1)$ matrix and
$D_1=\diag(d_2,\dots,d_n)$.

By the induction hypothesis, there exist signs $d_2,\dots,d_n$ such that
$|\det(A_1'+D_1)|\geq 1$. Consequently,
\[
|\det(A+D)|=|(a_{11}+d_1)|\cdot|\det(A_1'+D_1)|\geq 1.
\]

This completes the induction and the proof.
\end{proof}
The proof above yields an explicit algorithm for computing a suitable
signature matrix. The procedure proceeds iteratively by selecting the
diagonal entries $d_i$ and applying Gaussian elimination to eliminate
subdiagonal entries in the $i$-th column.

\begin{corollary}
Every orthogonal matrix \(U\) admits a representation
\[
U = D(I-S)(I+S)^{-1},
\]
where \(D\) is a diagonal matrix with entries in \(\{-1,1\}\)
and \(S\) is skew-symmetric.
Such a representation can be computed in \(O(n^{3})\) arithmetic operations.
\end{corollary}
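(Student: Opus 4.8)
The plan is to derive the corollary directly from Theorem~\ref{thm:main} applied to the orthogonal matrix $U$ itself. First I would invoke the theorem to obtain a signature matrix $D=\diag(d_1,\dots,d_n)$ with $d_i\in\{\pm1\}$ such that $|\det(U+D)|\ge 1$, which the theorem guarantees can be computed in $O(n^3)$ operations via the Gaussian-elimination procedure described in the proof. The theorem also tells us that $-1$ is not an eigenvalue of $DU$, so the Cayley transform $C(DU)$ is well defined.

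Next I would set $S=C(DU)=(I-DU)(DU+I)^{-1}$ and verify that $S$ is skew-symmetric. This is where the orthogonality of $U$ enters. Since $U$ is orthogonal and $D$ is a signature matrix, the product $DU$ is itself orthogonal (as $D^2=I$ and $D^T=D$). The standard fact is that the Cayley transform of an orthogonal matrix without eigenvalue $-1$ lands in the space of skew-symmetric matrices; this was already recalled in the introduction. So I would either cite that fact or spell out the short computation: writing $M=DU$ with $M^TM=I$, one checks $S^T=(DU+I)^{-T}(I-DU)^T=(U^TD+I)^{-1}(I-U^TD)$, and using $U^TD=(DU)^{-1}$ together with algebraic manipulation shows $S^T=-S$.

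Finally I would invert the Cayley transform to recover $U$. Since $C$ is an involution on its domain, $C(S)=DU$, that is,
\[
DU=(I-S)(I+S)^{-1}.
\]
Multiplying on the left by $D$ and using $D^2=I$ yields
\[
U=D(I-S)(I+S)^{-1},
\]
which is the claimed representation. The complexity claim follows because computing $D$ costs $O(n^3)$ by the theorem, and forming $S$ requires one matrix inversion and a matrix product, each $O(n^3)$.

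I do not expect any serious obstacle here, since the corollary is essentially a repackaging of the main theorem combined with the classical bijection between orthogonal matrices lacking the eigenvalue $-1$ and skew-symmetric matrices. The only point requiring mild care is the skew-symmetry verification, where one must use orthogonality of $DU$ rather than of $U$ alone; the bound $|\det(U+D)|\ge 1$ plays no further role beyond ensuring invertibility, so I would make sure the writeup invokes only the ``$-1$ is not an eigenvalue'' conclusion at that stage.
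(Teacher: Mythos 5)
Your proposal is correct and matches the paper's intent exactly: the paper states this corollary without proof as an immediate consequence of Theorem~\ref{thm:main} applied to \(U\), combined with the classical fact (recalled in the introduction) that the Cayley transform is a bijection between orthogonal matrices without eigenvalue \(-1\) and skew-symmetric matrices. Your write-up simply makes explicit the steps the paper leaves implicit --- obtaining \(D\) with \(C(DU)\) well defined, verifying skew-symmetry of \(S=C(DU)\) via orthogonality of \(DU\), and inverting the involutive Cayley map --- all of which are accurate, including the \(O(n^{3})\) cost accounting.
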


\begin{remark}[Compression of orthogonal matrices]
The proposed factorization shows that any real orthogonal matrix can be
losslessly encoded by a bounded skew-symmetric matrix and a diagonal sign
matrix.
This reduces the storage of an orthogonal matrix from $n^2$ real parameters to
$\frac{n(n-1)}{2}$ real parameters and $n$ bits.
Such a representation may be useful in applications where memory usage or data
transfer is critical, while exact reconstruction of the orthogonal matrix is
required.
\end{remark}

\section{Application to Optimization on \(SO(n)\)}

The Cayley transform provides a natural parametrization for optimization
problems on the orthogonal group.
However, standard Cayley-based methods suffer from singularities near matrices
with eigenvalue \(-1\).

The following result shows that the pivoting strategy described above maps the
entire orthogonal group into a fixed bounded subset of the space of
skew-symmetric matrices.

\begin{theorem}\label{thm:bound}[Uniform boundedness of the pivoted Cayley transform]
Let $U \in O(n)$ be an arbitrary orthogonal matrix.
There exists a signature matrix $D$ such that
\[
|\det(DU+I)| \geq 1.
\]
Then the Cayley transform
\[
C(DU) := (I-DU)(I+DU)^{-1}
\]
is well defined and belongs to a fixed bounded subset of the space of
skew-symmetric matrices. More precisely,
\[
\rho\big(C(DU)\big) \le 1 + 2^n,
\]
where $\rho(\cdot)$ denotes the spectral radius.
In particular, the pivoted Cayley transform maps the entire orthogonal group
$O(n)$ into a bounded subset of $\mathfrak{so}(n)$.
\end{theorem}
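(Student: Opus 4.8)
The plan is to reduce the entire statement to an eigenvalue computation for the orthogonal matrix $W := DU$, where $D$ is the signature matrix supplied by Theorem~\ref{thm:main}, so that $|\det(W+I)|\ge 1$ holds by construction. First I would observe that $W$ is again orthogonal, since $D^{T}D=I$, and that, passing to the algebraic closure, its eigenvalues $\lambda_1,\dots,\lambda_n$ all lie on the unit circle; write $\lambda_j=e^{i\theta_j}$. Because $-1$ is not among them, $C(W)$ is well defined and skew-symmetric, hence normal, so that its spectral radius equals the largest modulus of its eigenvalues. This turns the boundedness claim into a bound on individual eigenvalue moduli.

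Next I would invoke the spectral mapping of the Cayley transform: the eigenvalues of $C(W)$ are $\mu_j=(1-\lambda_j)/(1+\lambda_j)$. A short computation gives
\[
\mu_j=\frac{1-e^{i\theta_j}}{1+e^{i\theta_j}}=-i\tan(\theta_j/2),
\qquad
|1+\lambda_j|=2\,|\cos(\theta_j/2)|,
\]
so that $|\mu_j|=|\tan(\theta_j/2)|\le 1/|\cos(\theta_j/2)|=2/|1+\lambda_j|$. Thus each eigenvalue modulus of $C(W)$ is controlled by the reciprocal of the corresponding factor $|1+\lambda_j|$, and the familiar singularity near eigenvalue $-1$ is exactly the vanishing of this factor.

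The decisive step is to convert the global hypothesis $|\det(W+I)|=\prod_j|1+\lambda_j|\ge 1$ into a pointwise lower bound on the smallest factor. Here the crucial leverage is that every factor is a priori bounded above, $|1+\lambda_j|\le 2$, because $|\lambda_j|=1$. Consequently, isolating the minimal factor and bounding the remaining $n-1$ factors by $2$ each yields
\[
\min_j|1+\lambda_j|\;\ge\;\frac{|\det(W+I)|}{2^{\,n-1}}\;\ge\;2^{-(n-1)}.
\]
Combining this with the previous paragraph gives $\rho\big(C(W)\big)=\max_j|\mu_j|\le 2/\min_j|1+\lambda_j|\le 2^{n}\le 1+2^{n}$, which is the asserted bound (in fact with the slightly sharper constant $2^{n}$).

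I expect the main obstacle to be precisely this last conversion from the product bound to the smallest-factor bound: it is what forces the exponential constant $2^{n}$ to appear and what ties the boundedness of the skew-symmetric generator to the determinant normalization of Theorem~\ref{thm:main}. The complex-conjugate structure of the eigenvalues needs no separate treatment, since $|\mu_j|$ is invariant under $\lambda_j\mapsto\bar\lambda_j$; and the identification of the spectral radius with the maximal eigenvalue modulus is immediate once skew-symmetry, hence normality, of $C(W)$ is recorded.
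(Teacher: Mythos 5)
Your proof is correct and takes essentially the same route as the paper: both convert the hypothesis $|\det(DU+I)|\ge 1$, together with the a priori bound $|1+\lambda_j|\le 2$ coming from $|\lambda_j|=1$, into the minimal-factor estimate $\min_j|1+\lambda_j|\ge 2^{1-n}$, and then translate that into a spectral bound on the Cayley transform. The only (cosmetic) difference is in the last step: you use the exact spectral mapping $\mu_j=(1-\lambda_j)/(1+\lambda_j)=-i\tan(\theta_j/2)$ together with normality of the skew-symmetric matrix $C(DU)$, which yields the marginally sharper constant $2^n$, while the paper writes $C(DU)=-I+2(DU+I)^{-1}$ and applies the triangle inequality in the spectral norm to obtain $1+2^n$.
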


\begin{proof}
Applying the algorithm of Theorem~\ref{thm:main} to the matrix $U$,
we obtain a signature matrix $D$ such that
\[
|\det(DU+I)|\ge 1.
\]
Since $DU$ is orthogonal, all eigenvalues of $DU$ lie on the unit circle.
Consequently, every eigenvalue of $DU+I$ has absolute value at most $2$.

Let $\lambda_{\min}$ denote an eigenvalue of $DU+I$ of minimal absolute value.
Then
\[
|\det(DU+I)|
= \prod_{j=1}^n |\lambda_j|
\le 2^{\,n-1} |\lambda_{\min}|,
\]
which together with $|\det(DU+I)|\ge 1$ implies
\[
|\lambda_{\min}|\ge 2^{1-n}.
\]
Hence, all eigenvalues of $(DU+I)^{-1}$ have absolute value at most $2^{n-1}$,
and therefore
\[
\rho\big((DU+I)^{-1}\big)\le 2^{n-1}.
\]

Since
\[
C(DU)=-I+2(DU+I)^{-1},
\]
we obtain the estimate
\[
\rho\big(C(DU)\big)
\le \|I\|_2 + 2\|(DU+I)^{-1}\|_2
\le 1 + 2^n.
\]
In particular, all eigenvalues of $C(DU)$ are bounded in absolute value by
$1+2^n$, and thus $C(DU)$ belongs to a bounded subset of the space of
skew-symmetric matrices.
\end{proof}

\begin{remark}
The bound \(2^{n}\) is not optimal.
The purpose of Theorem~\ref{thm:bound} is to establish uniform boundedness
independent of \(U\), which is sufficient for applications to global
optimization on \(SO(n)\).
\end{remark}

\begin{remark}
O'Dorney \cite{Od} shows a stronger result: there exists a signature matrix $D$ such that all entries of the Cayley transform $C(DU)$ lie in the interval $[-1,1]$. However, the proof is non-constructive and does not yield an efficient algorithm. It remains an open problem whether one can compute such a matrix $D$ with $O(n^3)$ arithmetic operations.
\end{remark}

\begin{remark} [Quantitative control of Cayley re-centering]
The use of the Cayley transform in optimization on orthogonal groups is known
to suffer from singularities when the iterates approach matrices with
eigenvalue $-1$, leading to extremely slow convergence due to the blow-up of
the associated skew-symmetric parameter.
This phenomenon and a practical remedy based on changing the origin of the
Cayley transform, referred to as \emph{pivoting}, were analyzed in
\cite{HoriTanaka2010}.

The construction presented above provides a deterministic and quantitative
variant of such re-centering strategies.
Given an orthogonal matrix $U$, one can select a diagonal signature matrix
$D=\mathrm{diag}(\pm1)$ such that the Cayley transform
\[
C(DU)=(I-DU)(I+DU)^{-1}
\]
is well defined and satisfies the explicit bound
\[
\rho\big(C(DU)\big)\le 1+2^n.
\]
Thus, whenever the skew-symmetric representation associated with the current
iterate becomes large, one may replace $U$ by $DU$ and continue the
optimization within a bounded coordinate chart of the Lie algebra.

While the bound grows exponentially with the dimension, it already yields
effective control in small to moderate dimensions, which are typical in
applications of Cayley-transform-based optimization.
In contrast to heuristic pivoting rules, this approach provides an explicit
a priori bound on the size of the skew-symmetric parameter after re-centering.
\end{remark}

\section*{Conclusion}
We have presented a constructive algorithm that, given any real orthogonal
matrix $U$, computes a diagonal signature matrix $D$ such that the Cayley
transform $C(DU)$ is well defined.
This provides a representation
\[
U = D(I-S)(I+S)^{-1}, \qquad S \text{ skew-symmetric},
\]
and yields two immediate applications:
\begin{enumerate}
    \item In optimization on the orthogonal group, the method allows
    quantitative re-centering to avoid singularities near eigenvalue $-1$.
    \item Any orthogonal matrix can be compactly encoded via a bounded
    skew-symmetric matrix and a diagonal signature matrix, reducing storage
    requirements from $n^2$ reals to $\frac{n(n-1)}{2}$ reals plus $n$ bits.
\end{enumerate}
While the bound on the skew-symmetric parameter grows exponentially with
dimension, the algorithm is efficient and practical for small to moderate
$n$. 
An open question remains whether one can constructively achieve a uniform
entrywise bound, as in \cite{Od}, with comparable computational complexity.



\begin{thebibliography}{99}

\bibitem{OsLie}
H.~Liebeck and A.~Osborne,
\textit{The generation of all rational orthogonal matrices},
Amer. Math. Monthly 98 (1991), 131--133.

\bibitem{Ka}
W.~Kahan,
\textit{Is there a small skew Cayley transform with zero diagonal?}
Linear Algebra Appl. 417 (2006), 335--341.

\bibitem{Bib}
I.~Biborski,
\textit{On computation of skew-symmetric generator for an orthogonal matrix},
UIAM (2012), 47--51.

\bibitem{Od}
E.~O'Dorney,
\textit{Minimizing the Cayley transform of an orthogonal matrix by multiplying
by signature matrices},
Linear Algebra Appl. 448 (2014), 97--103.

\bibitem{Gall}
J.~Gallier,
\textit{Remarks on the Cayley representation of orthogonal matrices},
arXiv:math/0606320.

\bibitem{HoriTanaka2010}
G.~Hori and T.~Tanaka,
\textit{Pivoting in Cayley transform-based optimization on orthogonal groups},
Proc. APSIPA Annual Summit and Conf., 2010.


\end{thebibliography}
\end{document}